\theoremstyle{plain}
\newtheorem{theorem}{Theorem}
\newtheorem{corollary}[theorem]{Corollary}
\newtheorem{prop}[theorem]{Proposition}
\theoremstyle{remark}
\theoremstyle{definition}
\begin{document}

\title{On certain generalizations of the Levi-Civita and Wilson functional equations}
\author{J.~M.~Almira$^*$ and E. V. Shulman}

\subjclass[2010]{Primary 43B45, 39A70; Secondary 39B52.}

\keywords{Addition theorems, Levi-Civita functional equation, Wilson functional equation, equations in iterated differences, 
functional equations in distributions, exponential polynomials, Skitovich-Darmois equation}
\thanks{$^*$ Corresponding author}

\address{Departamento de Matem\'{a}ticas, Universidad de Ja\'{e}n, E.P.S. Linares,  Campus Cient\'{\i}fico Tecnol\'{o}gico de Linares, 23700 Linares, Spain}
\email{jmalmira@ujaen.es}
\address{Department of Mathematics, Vologda State University, S. Orlova~ 6, Vologda 160035, Russia;\newline 
and \newline
Instytut of Mathematics, University of Silesia, Bankowa 14,  PL-40-007 Katowice, Poland}
\email{shulmanka@gmail.com, ekaterina.shulman@us.edu.pl}

\begin{abstract}
We study the  functional equation
\[
\sum_{i=1}^mf_i(b_ix+c_iy)= \sum_{k=1}^nu_k(y)v_k(x)
\]
with $x,y\in\mathbb{R}^d$ and $b_i,c_i\in {GL}(d,\mathbb{R})$, both in the classical context 
of continuous complex-valued functions and in the  framework of complex-valued Schwartz 
distributions, where these equations are properly introduced in two different ways. 
The solution sets are, typically,  exponential polynomials and, in some particular cases, related 
to so called characterization problem of the normal distribution 
in Probability Theory, they reduce to ordinary polynomials.
\end{abstract}

\maketitle

\markboth{J.~M.~Almira, E. Shulman}{On certain generalizations of the Levi-Civita and Wilson functional equations}

\section{Introduction}
Levi-Civita functional equation has the form
\begin{equation} \label{LC}
f(x+y)=\sum_{k=1}^nu_k(y)v_k(x),
\end{equation}
where $f, u_k, v_k$, ($1\le k\le n$), are complex-valued functions defined on a semigroup $(G,+)$. This equation can be restated by claiming that
$\tau_y(f)\in W$ for all $y\in G$, where $W=\text{span}\{v_k\}_{k=1}^n$ is a finite-dimensional space of functions defined on $G$ and $\tau_y(f)(x)=f(x+y)$.

In this paper we deal with the case $G = \mathbb{R}^d$ and study a more general functional equation
\begin{equation} \label{LCGG}
\sum_{i=1}^m f_i(b_ix+c_iy)= \sum_{k=1}^nu_k(y)v_k(x)
\end{equation}
for all $x,y\in\mathbb{R}^d$, where $f_i, v_k, u_k$, for $1\le i\le m$, $1\le k\le n$,   are  functions defined on $\mathbb{R}^d$, and $b_i,c_i\in {GL}(d,\mathbb{R})$. Our main result is that all continuous solutions of (\ref{LCGG}) are exponential polynomials. Moreover, using a result from \cite{S} we extend this statement to equations of the form
\begin{equation}\label{ras}
\sum_{i=1}^mf_i(b_ix+c_iy)= \sum_{k=1}^nu_k(y)v_k(x) + \sum_{s=1}^N P_s(x)w_s(y)\exp{\langle x,\varphi_s(y)\rangle},
\end{equation}
where $P_s$ are polynomials and functions $w_s, \varphi_s$ are arbitrary.

In one-dimensional case, addition theorems with such a left-hand side were studied  by Wilson \cite{Wil} 
a hundred years ago. Applying an elimination method, which now is a classical one, he showed 
that all continuous solutions of the equation
\begin{equation*} 
\sum_{i=1}^m f_i(\alpha_i x+ \beta_i y)= f(x) + g(y)
\end{equation*}
are polynomials of  degree  not  greater  than $m$.

The equation (\ref{LCGG}) includes the equations in iterated differences:
\begin{equation*}
\sum_{j=1}^m\lambda_j\Delta_y^{j}f=\sum_{k=1}^nu_k(y)v_k \text{ , } y\in\mathbb{R}^d,
\end{equation*}
where $\Delta_y$ is the difference operator $\Delta_y(f)(x) = f(x+y) - f(x)$. Indeed $$\Delta_y^{m}f(x)=1\cdot f(x)+\sum_{p=1}^m\binom{m}{p}(-1)^{m-p}f(I_dx+(pI_d)y),$$
where $I_d$ denotes the identity matrix of size $d$. A simplest example is the Frechet's equation $\Delta_y^{m}f = 0$.

The equation (\ref{LCGG}) extends also the functional equation
 \begin{equation*}
\frac{1}{N}\sum_{k=0}^{N-1} f(z+w^kh)=0 \text{, for all } z,h\in\mathbb{C},
\end{equation*}
where $w$ is any primitive $N$-th root of $1$. This equation, characterizing harmonic polynomials, was introduced by S. Kakutani and M. Nagumo \cite{kn}, Walsh \cite{wal}  in 1930's, and intensively studied by S. Haruki \cite{h1,h2,h3} in  1970's  and   1980's.

Another special case of
(\ref{LCGG}) is the equation
\begin{equation}\label{Ski-ordinary}
\sum_{i=1}^{m}f_i(b_ix+c_iy)= \sum_{i=1}^m f_i(b_ix) +   \sum_{i=1}^m f_i(c_iy),
\end{equation}
which is a linearized (by taking logarithms) form of the
Skitovich-Darmois functional equation:
\begin{equation*}
\prod_{i=1}^m \widehat{\mu_i}(b_ix+c_iy) = \prod_{i=1}^m \widehat{\mu_i}(b_ix)  \prod_{i=1}^m  \widehat{\mu_i}(c_iy).
\end{equation*}
Here, $ \widehat{\mu_i}$ represents the characteristic function of a probability distribution $\mu_i$. This equation is connected to the characterization  problem of normal distributions. Concretely, its study leads to a proof of the following result [Linnik \cite{Linn}, Ghurye-Olkin \cite{Gu_O}, \cite{KLR}]:

 {\it Assume that $X_i$, $i=1,\cdots,m$ are independent $d$-dimensional random vectors such that the linear forms $L_1= b_1^tX_1+...+b_m^tX_m$ and $L_2= c_1^tX_1+...+c_m^tX_m$  are independent, with $b_i, c_i\in {GL}(d,\mathbb{R})$ for $i=1,\cdots,m$. Then $X_i$ is Gaussian for all $i$}.

The equation \eqref{Ski-ordinary} (with the change of matrices $b_i, c_i$ by automorphisms) and its applications to  probability distributions, has been studied in  great detail by Feld'man \cite{Feld}, for functions defined on locally compact commutative groups.

A more general specialization of \eqref{LCGG} was considered by Ghurye and Olkin in \cite{Gu_O}:
\begin{equation}\label{G-O}
\sum_{i=1}^mf_i(x+c_iy)=A(x,y)+B(y,x),
\end{equation}
where $f_i$ map $\mathbb{R}^d$ to $\mathbb{C}$, and the functions $A,B$ are such that, for each $y\in\mathbb{R}^d$, $A(x,y)$ and $B(x,y)$  are polynomials in the variable $x$ with degrees not greater than  $r$ and $s$, respectively (here $r,s$ do not depend on $y$).  This equation has also proven to be a useful tool in the  study of probability distributions (see, for example, \cite[Chapter 7]{MaPe}).

The equation \eqref{LC} can be formulated also for distributions, since the shift operator
$\tau_y: f(x)\mapsto f(x+y)$ and dilation operator $\sigma_b: f(x)\mapsto f(bx)$ can be extended to the space $\mathcal{D}(\mathbb{R}^d)'$  of Schwartz complex-valued distributions (as the adjoint of the corresponding operators on $\mathcal{D}(\mathbb{R}^d)$). Our results in this setting  extend the  Anselone-Korevaar \cite{anselone} theorem on finite-dimensional shift-invariant subspaces of $\mathcal{D}(\mathbb{R}^d)'$ and the results of \cite{leland}, \cite{Lo} (see also \cite{AA_AM} -\cite{AS_DM}). They show in particular that the continuity restrictions on solutions and coefficients of \eqref{LC} can be weakened at least to local integrability.

It should be underlined that in all settings if the functions or distributions $u_k,v_k$ in (\ref{LCGG}) are linearly independent (which always can be assumed), then they are linear combinations of (shifted) functions $f_i$. Therefore, proving that $f_i$ are exponential polynomials we simultaneously prove the same for $u_k$ and $v_k$.

\section{Solution of equation (\ref{LCGG})}

Our aim here is to establish the following result:
\begin{theorem}\label{main}
If the functions $v_k$ are continuous, the matrices $b_i$, $c_i$ and $b_i^{-1}c_i - b_j^{-1}c_j$ (for $i\neq j$) are invertible, then all solutions $f_i\in C(\mathbb{R}^d)$ of \emph{(\ref{LCGG})} are exponential polynomials.
\end{theorem}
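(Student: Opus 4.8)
The plan is to prove the theorem by induction on $m$, the number of summands on the left-hand side. The base case $m=1$ reduces to the classical Levi-Civita equation $f_1(b_1x+c_1y) = \sum u_k(y)v_k(x)$, which after the change of variables $x \mapsto b_1^{-1}x$, $y \mapsto c_1^{-1}y$ becomes $f_1(x+y) = \sum \tilde u_k(y)\tilde v_k(x)$; the continuous solutions of this are well known to be exponential polynomials, since $f_1$ generates a finite-dimensional translation-invariant space of continuous functions. The heart of the matter is the inductive step, and the key tool will be an elimination technique of the Wilson type, applied in the $\mathbb{R}^d$ setting.

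For the inductive step, the strategy is to apply a difference-type operator in the variable $y$ that annihilates one of the summands and reduces the equation to one with fewer terms on the left while preserving the structural form of the right-hand side. Concretely, fix an index, say $i=m$, and apply to the equation, in the variable $y$, the translation $\tau_h$ followed by subtraction, i.e. consider $\Delta_h^y$ acting on $y$. The crucial observation is that the right-hand side $\sum_k u_k(y)v_k(x)$, viewed as a function of $x$ for each fixed $y$, lives in the fixed finite-dimensional span $W = \operatorname{span}\{v_k\}$; any finite combination of $y$-shifts of the whole equation still has a right-hand side of the form $\sum_k \tilde u_k(y)v_k(x)$, since the $v_k$ are unchanged. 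Thus the right-hand side retains its shape under $y$-operations, and the game is entirely about the left-hand side. Here the invertibility hypothesis on $b_i^{-1}c_i - b_j^{-1}c_j$ enters decisively: after substituting $x \mapsto b_m^{-1}x - b_m^{-1}c_m\,y'$ for a suitable auxiliary variable (equivalently, performing the change of variables that makes the $m$-th argument depend only on $x$), the $i$-th argument becomes $b_ib_m^{-1}x + (c_i - b_ib_m^{-1}c_m)y$, and the coefficient matrix of $y$ in this argument is $b_i(b_i^{-1}c_i - b_m^{-1}c_m)$, which is invertible precisely when $b_i^{-1}c_i - b_m^{-1}c_m$ is. This nondegeneracy guarantees that a genuine $y$-difference operator acts nontrivially on each surviving summand $f_i(\cdots)$ for $i \neq m$, so that eliminating the $m$-th term does not collapse the others to zero, and the reduced equation is again of the form (\ref{LCGG}) with $m-1$ summands and the same invertibility hypotheses on the remaining matrices.

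Carrying this out rigorously requires some care, because differencing in $y$ turns the unknown $f_i$ into a difference $f_i(\cdots + \text{shift}) - f_i(\cdots)$, and one must recover information about $f_i$ itself from such combinations. The plan is to argue that, by the induction hypothesis, each $y$-difference of $f_m$ (with the other terms absorbed) is an exponential polynomial, and then to integrate this information back: a continuous function all of whose first differences $\Delta_h f_m$ are exponential polynomials lying in a common finite-dimensional space is itself an exponential polynomial. This last step rests on the finite-dimensionality of the relevant difference space and on the classical characterization of exponential polynomials as continuous solutions of systems of the form $\tau_h f - (\text{element of a fixed finite-dimensional translation-invariant space}) = 0$; equivalently, $f_m$ generates a finite-dimensional translation-invariant space modulo exponential polynomials, which forces $f_m$ into that class.

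The main obstacle, and the step I expect to demand the most work, is managing the \emph{mutual} coupling of the summands during elimination: a single $y$-difference removes one term but simultaneously transforms \emph{all} the others, and one must ensure that after the reduction the surviving equation genuinely satisfies the invertibility conditions needed to invoke the induction hypothesis, rather than degenerating. The invertibility of the differences $b_i^{-1}c_i - b_j^{-1}c_j$ is exactly what prevents two of the transformed arguments from becoming affinely dependent (which would merge or annihilate terms uncontrollably), so the bookkeeping of how these pairwise-difference matrices transform under the successive changes of variable is the delicate core of the argument. A secondary technical point is to handle the passage from continuity to the exponential-polynomial conclusion in $\mathbb{R}^d$ uniformly, which I would package as a lemma on finite-dimensional difference-invariant (equivalently translation-invariant) spaces of continuous functions.
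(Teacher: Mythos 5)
Your overall strategy---normalize the arguments, difference in $y$ so that one summand cancels while the right-hand side stays inside a finite-dimensional span, and induct on $m$---is exactly the route the paper takes: after reducing to $b_i=I_d$, it substitutes $y\mapsto y-c_1^{-1}h$ and applies $\tau_h$, so the first summand is unchanged and cancels upon subtraction, leaving $\sum_{i=2}^m\tau_{c_iy}(g_i)\in W+\tau_h(W)$ with $g_i(x)=f_i(x+(I_d-c_ic_1^{-1})h)-f_i(x)$. (Note that the elimination necessarily involves an $x$-translation as well, so the right-hand space is not literally $\span\{v_k\}$ but $W+\tau_h(W)$; this is harmless since it is still finite-dimensional, but it does depend on $h$.) Your bookkeeping of the invertibility hypotheses is also fine: the reduced equation involves the same matrices $c_2,\dots,c_m$, and the invertibility of $I_d-c_ic_1^{-1}$, equivalent to that of $c_i-c_1$, is what converts ``$\Delta_{d_ih}f_i$ is an exponential polynomial for all $h$'' into ``$\Delta_{y}f_i$ is an exponential polynomial for all $y$''.

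The gap is in your final ``integration'' step. You assert that the differences $\Delta_hf_i$ are exponential polynomials \emph{lying in a common finite-dimensional space}, and you rest the conclusion on a lemma with that hypothesis. But the induction hypothesis gives no such uniformity: for each $h$ it produces an exponential polynomial contained in \emph{some} finite-dimensional translation-invariant subspace $R(h)$, and both the frequencies and the degrees of these exponential polynomials may a priori vary with $h$ without bound (the ambient space $W+\tau_h(W)$ moves with $h$ as well). If a single fixed invariant space $V$ contained all the differences, the conclusion would indeed be immediate, since every translate of $f_i$ would then lie in $\mathbb{C}f_i+V$; the entire difficulty is that no such $V$ is available. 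What is actually needed is: if for every $h$ one has $\tau_hf_i\in\mathbb{C}f_i+R(h)$ with $R(h)$ a finite-dimensional invariant subspace \emph{depending on} $h$, then $f_i$ lies in a single finite-dimensional invariant subspace. This is a genuine theorem (Proposition \ref{Ekaterina}, taken from \cite{S}), not a routine consequence of finite-dimensionality, and your sketch neither proves it nor cites a substitute. Until that step is supplied, the inductive step does not close.
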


Note that the substitution $\widetilde{f}_i(x) = f_i(b_ix)$ reduces the equation (\ref{LCGG}) to the case that $b_i = I_d$, the identity matrix, that is to the equation
\begin{equation}\label{L-Ch-S-R}
\sum_{i=0}^mf_i(x+c_iy)= \sum_{k=1}^nu_k(y)v_k(x)
\end{equation}

 So in what follows we mostly consider this case.

For $y\in \mathbb{R}^d$, let $\tau_y$ denote the shift operator on $C(\mathbb{R}^d)$:
$\tau_y(f)(x) = f(x+y)$. Then denoting by $W$ the subspace generated by $v_1,...,v_n$, we may reformulate (\ref{L-Ch-S-R}) saying that all functions $\sum_{i=1}^m\tau_{c_iy}(f_i)$ belong to $W$.

In the proof of Theorem \ref{main} we will use  the following result from \cite{S}:

\begin{prop} \label{Ekaterina}
Let $\pi$ be a continuous representation of a topologically finitely generated semigroup $G$ on a topological linear space $X$. Suppose that a vector $x\in X$ and a finite-dimensional subspace $L\subset X$  have the property that for any $g\in G$, there is a finite-dimensional $\pi$-invariant subspace $R(g)\subset X$  with
$\pi(g)(x) \in L + R(g)$. Then $x$ belongs to a finite-dimensional $\pi$-invariant subspace of $X$.
\end{prop}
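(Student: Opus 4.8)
The plan is to show that the subspace generated by $x$ under a \emph{finite} generating set is already finite-dimensional, and then to upgrade invariance from the generators to all of $G$ by a density argument. Fix a finite set $s_1,\dots,s_r\in G$ whose generated subsemigroup $H$ is dense in $G$, and write $T_j=\pi(s_j)$. Since finite-dimensional subspaces of a Hausdorff topological linear space are closed, it suffices to exhibit a finite-dimensional subspace $Z$ with $x\in Z$ and $T_jZ\subseteq Z$ for all $j$: then $Z$ is $H$-invariant, and for $g\in G=\overline{H}$ and $z\in Z$ one approximates $g$ by a net in $H$ and uses continuity of the representation together with closedness of $Z$ to conclude $\pi(g)z\in Z$. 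The central device is the \emph{locally finite part} $N=\sum\{R:\ R\subseteq X \text{ finite-dimensional and } \pi\text{-invariant}\}$, which is itself $\pi$-invariant and has the crucial feature that any finite-dimensional subspace of $N$ lies inside a \emph{single} finite-dimensional $\pi$-invariant subspace. By hypothesis $R(g)\subseteq N$, whence $\pi(g)x\in L+N$ for every $g$.

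Next I would pass to the quotient $Q=X/N$ with projection $q$, on which $\pi$ induces (algebraically) a representation $\bar\pi$. Because $\pi(g)x\in L+N$, the image $\bar x=q(x)$ satisfies $\bar\pi(g)\bar x\in q(L)=:\bar L$ for all $g$, and $\bar L$ is finite-dimensional. Hence the $H$-cyclic subspace of $\bar x$ lies in $\C\bar x+\bar L$ and is finite-dimensional. Choosing finitely many words $w_1,\dots,w_p$ in the generators so that the vectors $q(T_{w_i}x)$ span this cyclic subspace, I set $P=\span\{x,\,T_{w_1}x,\dots,T_{w_p}x\}$, a fixed finite-dimensional subspace containing $x$. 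By the spanning property, for every word $w$ one has $T_wx\equiv\sum_i\lambda_i(w)\,T_{w_i}x \pmod N$ for suitable scalars.

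The key step is to control $T_jP$. For each generator $T_j$ and each representative, $T_jT_{w_i}x=T_{jw_i}x$ is again an orbit vector along a word, so $f_{ji}:=T_{jw_i}x-\sum_k\lambda_k T_{w_k}x\in N$, and likewise $f_j:=T_jx-\sum_k\mu_k T_{w_k}x\in N$. Thus $T_jP\subseteq P+F$, where $F=\span\{f_{ji},f_j\}$ is finite-dimensional and contained in $N$. Invoking the defining property of $N$, I choose a finite-dimensional $\pi$-invariant subspace $\tilde R\supseteq F$ and put $Z=P+\tilde R$. Then $Z$ is finite-dimensional, $x\in Z$, and $T_jZ=T_jP+T_j\tilde R\subseteq(P+F)+\tilde R\subseteq Z$; so $Z$ is invariant under every generator, and the density argument above finishes the proof.

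The main obstacle is exactly that the data are only \emph{locally} finite: the subspace $L$ is fixed but need not be $\pi$-invariant, and the invariant spaces $R(g)$ may depend on $g$, so no uniform bound is available to exploit directly. Passing to the quotient by $N$ is what converts the hypothesis ``$\pi(g)x\in L+R(g)$ for a varying finite-dimensional invariant $R(g)$'' into the usable statement ``the orbit of $\bar x$ lands in the single finite-dimensional space $\bar L$.'' A second delicate point is that $N$ is typically infinite-dimensional and need not be closed; the argument is arranged so that closedness of $N$ is never used, only that the finitely many correction vectors $f_{ji},f_j$ — a finite-dimensional subset of $N$ — can be absorbed into one finite-dimensional invariant subspace $\tilde R$, which is precisely what renders $Z$ finite-dimensional.
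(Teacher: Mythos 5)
Your attempt cannot be checked against an in-paper proof, because the paper gives none: Proposition \ref{Ekaterina} is imported from \cite{S} (``we will use the following result from \cite{S}''), and its proof lives in that reference. Judged on its own terms, your argument is correct and complete, and it is exactly the kind of argument that underlies the cited result: form the locally finite part $N$ (the sum of all finite-dimensional $\pi$-invariant subspaces), observe that $R(g)\subseteq N$ so that modulo $N$ the entire orbit of $x$ lies in the finite-dimensional space $q(L)$, use topological finite generation to pick finitely many words $w_1,\dots,w_p$ whose images span that orbit modulo $N$, absorb the finitely many correction vectors (all lying in $N$) into a single finite-dimensional invariant subspace $\tilde R$, and then extend the invariance of $Z=P+\tilde R$ from the generators to all of $G$ by density, continuity of $g\mapsto\pi(g)z$, and closedness of finite-dimensional subspaces. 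Each of the individual claims you rely on (invariance of $N$, absorption of any finite-dimensional subset of $N$ into one finite-dimensional invariant subspace, invariance of $Z$ under each $T_j$, the net argument) checks out.

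Two small points deserve tightening. First, the vectors $q(T_{w_i}x)$ can only be chosen to span the orbit span $\mathrm{span}\{q(T_wx): w \text{ a word}\}$, not necessarily the full cyclic subspace $\C\bar x+\bar L$ that you name: nothing guarantees $\bar x$ itself is a combination of orbit vectors (the semigroup need not contain an identity). Since the orbit span is all you actually use afterwards --- you put $x$ into $P$ separately --- this is a wording slip, not a gap, but state it that way. Second, your reduction to generator-invariance needs finite-dimensional subspaces of $X$ to be closed, i.e., $X$ Hausdorff; the proposition says only ``topological linear space,'' so this is strictly speaking an added hypothesis, though it is the standard reading and holds in the spaces $C(\R^d)$ and $\mathcal{D}(\R^d)'$ to which the paper applies the proposition.
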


In this section Proposition \ref{Ekaterina} will be applied to the representation $y\mapsto \tau_y$ of the group $\mathbb{R}^d$ by shifts on the space $C(\mathbb{R}^d)$.

By the above, Theorem \ref{main} is equivalent to the following:

\begin{theorem} \label{principalF}
Assume that $\{f_i\}_{i=1}^{m}\subset C(\mathbb{R}^d)$ and, for all $y\in \mathbb{R}^d$,
\begin{equation}\label{inv}
\sum_{i=1}^m\tau_{c_iy}(f_i) \in W \text{ for all } y\in\mathbb{R}^d
\end{equation}
where $W\subset C(\mathbb{R}^d)$ is a finite-dimensional subspace. If all matrices $c_i$ and $c_i-c_j$ (for $i\neq j$) are invertible,
then all $f_i$ are exponential polynomials.
\end{theorem}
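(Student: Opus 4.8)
The plan is to argue by induction on the number $m$ of summands, combining an elimination device in the pair of variables $(x,y)$ with Proposition \ref{Ekaterina}. Throughout I would use the standard characterization that a continuous function on $\R^d$ is an exponential polynomial precisely when it lies in a finite-dimensional shift-invariant subspace of $C(\R^d)$; thus the whole task reduces to placing each $f_i$ inside such a subspace. The base case $m=1$ is immediate: there $\tau_{c_1y}(f_1)\in W$ for all $y$, and since $c_1$ is invertible $c_1y$ ranges over all of $\R^d$, so $\tau_z(f_1)\in W$ for every $z$; hence the shift-invariant span of $f_1$ sits inside the finite-dimensional space $W$ and $f_1$ is an exponential polynomial.

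For the inductive step (assuming $m\ge 2$ and the statement for $m-1$ functions), I would eliminate the last summand by comparing the relation at $y$ with a transform that shifts the $x$-variable by $c_m s$ and the parameter by $-s$. Writing $\Phi_y=\sum_{i=1}^m\tau_{c_iy}(f_i)\in W$, the element $\tau_{c_ms}(\Phi_{y-s})$ lies in the finite-dimensional space $\tau_{c_ms}W$ and, as a function of the $x$-variable, equals $f_m(x+c_my)+\sum_{i<m}f_i(x+c_iy+(c_m-c_i)s)$. Subtracting $\Phi_y$ cancels the $f_m$-term and yields, for every $y$,
\[
\sum_{i=1}^{m-1}\tau_{c_iy}\big(\Delta_{(c_m-c_i)s}f_i\big)\in W_s:=W+\tau_{c_ms}W ,
\]
where $\Delta_t f=\tau_t f-f$ and $W_s$ is finite-dimensional. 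This is again an instance of \eqref{inv} with $m-1$ continuous unknowns and matrices $c_1,\dots,c_{m-1}$, whose pairwise differences are invertible by hypothesis, so the inductive hypothesis applies and shows that each $\Delta_{(c_m-c_i)s}f_i$ is an exponential polynomial. Since $c_m-c_i$ is invertible, letting $s$ vary gives that $\Delta_t f_i$ is an exponential polynomial for every $i<m$ and every $t\in\R^d$; eliminating a different summand yields the same conclusion for $i=m$.

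To upgrade this to $f_i$ itself I would invoke Proposition \ref{Ekaterina}, applied to the shift representation $y\mapsto\tau_y$ of the topologically finitely generated group $G=\R^d$ on $X=C(\R^d)$, taking the element $f_i$ in the role of the distinguished vector and $L=\span\{f_i\}$. For each $t\in\R^d$ the exponential polynomial $\Delta_t f_i$ has a finite-dimensional, shift-invariant span $R(t)$, and $\tau_t(f_i)=f_i+\Delta_t f_i\in L+R(t)$. Proposition \ref{Ekaterina} then places $f_i$ in a finite-dimensional shift-invariant subspace of $C(\R^d)$, so $f_i$ is an exponential polynomial, which closes the induction.

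The main obstacle I anticipate is the elimination step: one must choose exactly the right coupled shift in $(x,y)$ so that a single unknown drops out while the right-hand side stays inside a \emph{finite-dimensional} space (here $W_s$), and then check that the reduced equation still satisfies the invertibility hypotheses on $c_1,\dots,c_{m-1}$ and their differences so that the inductive hypothesis is genuinely available. A secondary delicate point is the passage from ``$\Delta_t f_i$ is an exponential polynomial for all $t$'' to ``$f_i$ is an exponential polynomial'', which is precisely what Proposition \ref{Ekaterina} is designed to supply and which would fail without the finite-dimensional $\pi$-invariance of each $R(t)$.
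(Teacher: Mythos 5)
Your proposal is correct and follows essentially the same route as the paper: the same coupled shift in $(x,y)$ that cancels one summand while keeping the right-hand side in the finite-dimensional space $W+\tau_{c_ms}W$, the same use of invertibility of the differences $c_m-c_i$ to let $\Delta_t f_i$ range over all $t$, and the same application of Proposition \ref{Ekaterina} with $L=\span\{f_i\}$ to conclude. The only cosmetic differences are that you eliminate $f_m$ rather than $f_1$ and argue the base case $m=1$ directly from the finite-dimensional shift-invariant span instead of citing the Levi-Civita literature.
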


\begin{proof}
We use induction on  $m$. The case $m=1$ is known. Indeed, in this case (\ref{L-Ch-S-R}) is the Levi-Civita equation, its solutions on abelian groups are described, for example, in \cite[Theorem 1]{E} (see also \cite{Sz-91, St} and references there). 
 
Suppose that the statement is true whenever the number of summands  appearing in the left-hand side  of the equation is strictly smaller than $m$.
Take arbitrary $h\in \mathbb{R}^d$ and substitute in (\ref{inv})  $y-c_1^{-1}h$ for $y$. Then applying the operator $\tau_h$ we will obtain
\begin{equation}\label{inv1}
\sum_{i=1}^m\tau_{h+c_i(y-c_1^{-1}h)}(f_i) \in \tau_h(W) \text{ for all } y\in\mathbb{R}^d.
\end{equation}
Comparing the left-hand sides of (\ref{inv}) and (\ref{inv1}), and setting  $W^*:=\tau_h(W)+W$ we obtain
\begin{equation}\label{inv2}
\sum_{i=2}^m(\tau_{c_iy +(I_d-c_ic_1^{-1})h}(f_i) - \tau_{c_iy}(f_i))  \in W^*,  \text{ for all } y\in\mathbb{R}^d.
\end{equation}
 Clearly $\dim W^* < \infty$. Setting $d_i = I_d-c_ic_1^{-1}$, define, for a fixed $h$, the functions $g_i$ by $g_i(x) = f_i(x+d_ih) - f_i(x)$. Then
  (\ref{inv2}) will get the form
  \begin{equation}\label{inv3}
\sum_{i=2}^m\tau_{c_iy}(g_i) \in W^* \text{ for all } y\in\mathbb{R}^d.
\end{equation}
By the induction hypothesis, we obtain that all functions $g_i$ are continuous exponential polynomials.

The matrices  $d_i = I_d-c_ic_1^{-1}$ are invertible, since $$\ker d_i = c_1\ker (c_i - c_1) = \{0\}.$$  Thus, the condition ``$f_i(x+d_ih) - f_i(x)$ is a continuous exponential polynomial for all $h$'' can  be written as ``$f_i(x+y) - f_i(x)$ is a continuous exponential polynomial for all $y$''. Since any exponential polynomial is contained in an invariant finite-dimensional subspace, we see that each function $\tau_yf_i$ belongs to the sum of the one-dimensional subspace $\mathbb{C}f_i$ and some invariant finite-dimensional subspace. By Proposition \ref{Ekaterina}, $f_i$ is contained in an invariant finite-dimensional subspace, so $f_i$ is an exponential polynomial. Here $i =2,...,m$, but clearly the same is true for $f_1$ by symmetry.
\end{proof}

\section{ A more general class of equations}

Here we consider the equation (\ref{ras}). Since the second term in the right-hand side of (\ref{ras}) is an exponential polynomial in $x$ for each $y$, the study of this equation reduces to the  following extension of Theorem \ref{principalF}.

\begin{theorem} \label{principalR}
Let $\{f_k\}_{k=1}^{m}\subset C(\mathbb{R}^d)$, $W$ be a finite-dimensional subspace of $C(\mathbb{R}^d)$, $c_i \in GL(d,\mathbb{R})$. Suppose that, for each $y\in \mathbb{R}^d$, there is a finite-dimensional translation invariant space $R(y)\subset C(\mathbb{R}^d)$ with
\begin{equation}\label{fund_gen}
\sum_{i=1}^m \tau_{c_iy}(f_i) \in W +R(y).
\end{equation}
If all matrices $c_i-c_j$ (for $i\neq j$) are invertible, then all $f_i$ are exponential polynomials.
\end{theorem}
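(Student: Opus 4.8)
The plan is to run the same induction on $m$ used in the proof of Theorem~\ref{principalF}, but now carrying along the $y$-dependent invariant spaces $R(y)$ at every stage. The key observation is that the finite-dimensional ``error'' spaces occurring in \eqref{fund_gen} split into two types that behave differently under the elimination: a fixed space $W$ (finite-dimensional but not necessarily translation invariant) and the spaces $R(y)$ (translation invariant but depending on $y$). Both will survive the elimination step with exactly the properties needed to continue.

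For the base case $m=1$ the classical Levi-Civita classification no longer applies directly because of the extra term $R(y)$, so I would instead invoke Proposition~\ref{Ekaterina}. Since $c_1\in GL(d,\mathbb{R})$, as $y$ runs over $\mathbb{R}^d$ the vector $z=c_1y$ runs over all of $\mathbb{R}^d$, so \eqref{fund_gen} becomes $\tau_z(f_1)\in W+R(c_1^{-1}z)$ for every $z$. Applying Proposition~\ref{Ekaterina} to the shift representation $z\mapsto\tau_z$ of $\mathbb{R}^d$ on $C(\mathbb{R}^d)$, taking $x=f_1$, $L=W$, and assigning to the element $z$ the invariant subspace $R(c_1^{-1}z)$, we conclude that $f_1$ lies in a finite-dimensional invariant subspace, i.e.\ $f_1$ is an exponential polynomial.

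For the inductive step I would repeat the elimination of Theorem~\ref{principalF} essentially verbatim: fix $h\in\mathbb{R}^d$, replace $y$ by $y-c_1^{-1}h$ in \eqref{fund_gen}, apply $\tau_h$, and subtract the original relation. The first summand cancels as before, and, setting $d_i=I_d-c_ic_1^{-1}$ and $g_i(x)=f_i(x+d_ih)-f_i(x)$, one obtains
\[
\sum_{i=2}^m\tau_{c_iy}(g_i)\in W^*+R^*(y),\qquad y\in\mathbb{R}^d,
\]
where $W^*=W+\tau_h(W)$ is a fixed finite-dimensional space and $R^*(y)=R(y)+R(y-c_1^{-1}h)$ is again finite-dimensional and translation invariant; here one uses that each $R(\cdot)$ is invariant, so that $\tau_h$ leaves $R(y-c_1^{-1}h)$ unchanged. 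The matrices $c_i-c_j$ remain invertible for $2\le i\ne j\le m$, so the induction hypothesis applies to the $m-1$ functions $g_2,\dots,g_m$ and shows they are exponential polynomials.

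Finally, exactly as in Theorem~\ref{principalF}, the matrices $d_i$ are invertible because $\ker d_i=c_1\ker(c_i-c_1)=\{0\}$, so letting $h$ vary upgrades ``$f_i(x+d_ih)-f_i(x)$ is an exponential polynomial'' to ``$f_i(x+y)-f_i(x)$ is an exponential polynomial for every $y$''. Hence $\tau_y(f_i)\in\mathbb{C}f_i+R(y)$ for a finite-dimensional invariant $R(y)$, and a last application of Proposition~\ref{Ekaterina} gives that $f_i$ is an exponential polynomial for $i=2,\dots,m$; the remaining function $f_1$ follows by eliminating the index $2$ instead of $1$. I expect the only genuine bookkeeping difficulty to be tracking the two kinds of error spaces simultaneously—verifying that $R^*(y)$ stays translation invariant while $W^*$ need not be—and recognizing that, unlike in Theorem~\ref{principalF}, the base case here already requires Proposition~\ref{Ekaterina} rather than the Levi-Civita classification, since $R(y)$ is present from the outset.
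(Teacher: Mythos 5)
Your proposal is correct and follows essentially the same route as the paper: the base case $m=1$ via Proposition~\ref{Ekaterina} (using invertibility of $c_1$), the same elimination producing $W_1=W+\tau_h(W)$ and the translation-invariant $R(y)+R(y-c_1^{-1}h)$, and the same final appeal to Proposition~\ref{Ekaterina} after letting $h$ vary. The points you flag as potential difficulties (invariance of the combined $R$-space, the base case needing Proposition~\ref{Ekaterina} rather than Levi-Civita) are exactly the ones the paper handles, and you handle them the same way.
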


\begin{proof} We proceed by induction on $m$. The case $m=1$ is solved by Proposition \ref{Ekaterina}, since $c_1$ is invertible. Take $m>1$ and   let $y\in\mathbb{R}^d$, so \eqref{fund_gen} holds for a finite-dimensional  invariant subspace $R(y)$ of $C(\mathbb{R}^d)$. Choosing $h\in \mathbb{R}^d$, we apply the assumption to $y-c_1^{-1}h$:
\begin{equation} \label{equno}
\sum_{i=1}^m\tau_{c_iy}(f_i)  \in W + R(y-c_1^{-1}h).
\end{equation}
Applying the operator $\tau_{h}$ to both sides of \eqref{equno} one obtains
\begin{equation} \label{eqtres}
\sum_{i=1}^m\tau_{h+c_i(y-c_1^{-1}h)}(f_i) \in \tau_h(W) + R(y-c_1^{-1}h).
\end{equation}
Subtracting \eqref{equno} from \eqref{eqtres} we get
\begin{equation}\label{inv22}
\sum_{i=2}^m(\tau_{c_iy +(I_d-c_ic_1^{-1})h}(f_i) - \tau_{c_iy}(f_i))  \in W_1+R_1(y),
\end{equation}
where $W_1=\tau_h(W)+W$ and $R_1(y)= R(y)+R(y-c_1^{-1}h)$. Clearly $W_1$ and $R_1(y)$ are finite-dimensional and $R_1(y)$ is translation invariant.

Setting $d_i = I_d-c_ic_1^{-1}$ define the functions $g_i$ by $g_i(x) = f_i(x+d_ih) - f_i(x)$. Then
  (\ref{inv22}) will get the form
\begin{equation*}
\sum_{i=2}^m\tau_{c_iy}(g_i) \in W_1+R_1(y), \text{ for all } y\in\mathbb{R}^d.
\end{equation*}
Thus, the induction step confirms us that all functions $g_i$, for $i=2,\ldots,m$, are exponential polynomials. Since $h$ is arbitrary, the proof can be finished in the same way as the proof of Theorem \ref{principalF}.
\end{proof}

\begin{corollary} \label{ku} If the functions $v_k$ are continuous, the matrices $b_i, c_i$ and $b_i^{-1}c_i-b_j^{-1}c_j$ (for $i\neq j$) are invertible, then all continuous solutions $f_i$ of \emph{(\ref{ras})} are exponential polynomials.
\end{corollary}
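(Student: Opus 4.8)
The plan is to obtain this corollary as a direct application of Theorem \ref{principalR}, exactly as the remark preceding that theorem suggests. First I would normalize the matrices $b_i$ by the same substitution used to pass from (\ref{LCGG}) to (\ref{L-Ch-S-R}): setting $\widetilde f_i(x)=f_i(b_ix)$ one has $f_i(b_ix+c_iy)=\widetilde f_i(x+b_i^{-1}c_iy)=\tau_{b_i^{-1}c_i\,y}(\widetilde f_i)(x)$, so that with $\hat c_i:=b_i^{-1}c_i$ the equation (\ref{ras}) takes the form
\[
\sum_{i=1}^m\tau_{\hat c_iy}(\widetilde f_i)=\sum_{k=1}^nu_k(y)v_k+\sum_{s=1}^N w_s(y)\,P_s(x)\exp\langle x,\varphi_s(y)\rangle .
\]
Since $b_i,c_i\in GL(d,\mathbb{R})$, each $\hat c_i$ is invertible as a product of invertible matrices, and $\hat c_i-\hat c_j=b_i^{-1}c_i-b_j^{-1}c_j$ is invertible for $i\neq j$ by hypothesis; these are precisely the non-degeneracy conditions demanded by Theorem \ref{principalR}. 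The $\widetilde f_i$ remain continuous, and $W:=\span\{v_1,\dots,v_n\}$ is a finite-dimensional subspace of $C(\mathbb{R}^d)$ because the $v_k$ are continuous.

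The key step is to absorb the second sum, for each fixed $y$, into a finite-dimensional translation-invariant subspace $R(y)$. For fixed $y$ this sum is, as a function of $x$, a linear combination of the exponential monomials $x^\beta\exp\langle x,\varphi_s(y)\rangle$ with $|\beta|\le\deg P_s$, so I would let $R(y)$ be the span of all such monomials as $s$ ranges over $1,\dots,N$ and $|\beta|\le\deg P_s$. This span is finite-dimensional, consists of (smooth, hence continuous) functions, and is translation invariant: from $\tau_h\bigl(x^\beta e^{\langle x,\lambda\rangle}\bigr)=e^{\langle h,\lambda\rangle}(x+h)^\beta e^{\langle x,\lambda\rangle}$ together with the expansion of $(x+h)^\beta$ into monomials $x^{\beta'}$ with $\beta'\le\beta$ componentwise, one sees that $R(y)$ is carried into itself. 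Hence for every $y$ the right-hand side lies in $W+R(y)$, which is exactly the hypothesis (\ref{fund_gen}) of Theorem \ref{principalR}.

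Applying Theorem \ref{principalR} then gives that every $\widetilde f_i$ is an exponential polynomial, and transferring the conclusion back through $f_i(x)=\widetilde f_i(b_i^{-1}x)$ -- which preserves the class of exponential polynomials, being an invertible linear change of variable -- shows that each $f_i$ is an exponential polynomial. I expect the only genuinely delicate point to be checking that $R(y)$ is simultaneously finite-dimensional and translation invariant; the remainder is bookkeeping around the substitution. It is worth stressing that the frequencies $\varphi_s(y)$ and the weights $w_s(y)$ are allowed to be arbitrary functions of $y$, so $R(y)$ genuinely depends on $y$, and this is precisely why the variable-subspace Theorem \ref{principalR}, rather than the fixed-subspace Theorem \ref{principalF}, is the correct tool.
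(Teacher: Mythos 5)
Your proposal is correct and is exactly the derivation the paper intends: the paper states (just before Theorem \ref{principalR}) that the extra term in \eqref{ras} is an exponential polynomial in $x$ for each $y$, hence lies in a finite-dimensional translation-invariant subspace $R(y)$, so the corollary follows from Theorem \ref{principalR} after the normalization $\widetilde f_i(x)=f_i(b_ix)$. Your explicit verification that $R(y)=\span\{x^{\beta}e^{\langle x,\varphi_s(y)\rangle}\}$ is finite-dimensional and translation invariant fills in the one detail the paper leaves implicit.
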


\section{ Distributions}

The functions which are the coefficients of the equation (\ref{LCGG})  (as well as its solutions)   could  a priori belong to a more general class than $C(\mathbb{R}^d)$. To handle  a more wide variety of situations   we will now study it in the distributional setting.

We will distinguish two variants of distributional view at the equation (\ref{LCGG}). If $u_k$ are usual (arbitrary!) functions while $v_k$ are distributions then the equation means that the sum in the left-hand side for every $y$ belongs to the linear span of the distributions $v_k$. So we come to the following setting:

\begin{theorem} \label{principal}
Assume that $\{f_k\}_{k=1}^{m}\subset \mathcal{D}(\mathbb{R}^d)'$ and, for all $y\in \mathbb{R}^d$,
\begin{equation*}
\sum_{i=1}^m\tau_{c_iy}(f_i) \in W \text{ for all } y\in\mathbb{R}^d
\end{equation*}
for an $n$-dimensional subspace $W$ of $\mathcal{D}(\mathbb{R}^d)'$. If all matrices $c_i$ and $c_i-c_j$ (for $i\neq j$) are invertible,
then all $f_k$ are continuous exponential polynomials.
\end{theorem}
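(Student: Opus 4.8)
The plan is to run the inductive scheme of Theorem \ref{principalF} verbatim, but now inside the space $X=\mathcal{D}(\mathbb{R}^d)'$, since every step of that proof is operator-theoretic and never uses continuity of the $f_i$ as such. Two facts make this transcription legitimate. First, the shift $\tau_y$ (and its composition with dilations) is a well-defined continuous operator on $X$, being the adjoint of the corresponding operator on test functions, and $y\mapsto\tau_y$ is a continuous representation of $\mathbb{R}^d$ on $X$; as $\mathbb{R}^d$ is topologically finitely generated, Proposition \ref{Ekaterina} is available for this representation exactly as in the previous sections. The difference operations $g_i:=\tau_{d_ih}(f_i)-f_i$ used in the argument are equally meaningful in $X$.

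Second---and this is the real content behind the word ``continuous'' in the conclusion---I would use the structural statement that every distribution lying in a finite-dimensional $\tau$-invariant subspace $V\subset X$ is a continuous (in fact smooth) exponential polynomial. To see this, observe that $y\mapsto\tau_y|_V$ is a weak-$\ast$ continuous homomorphism of $\mathbb{R}^d$ into the finite-dimensional group $GL(V)$, hence smooth and of the form $\exp$ of a commuting family of generators. Pairing $\tau_y(v)$ with a test function then exhibits $y\mapsto\langle v,\tau_{-y}\varphi\rangle$ as an exponential polynomial for every $\varphi$, which forces $v$ itself to be an exponential polynomial and, a posteriori, a smooth function. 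This is the distributional analogue of the Anselone--Korevaar theorem and is precisely the input drawn from \cite{anselone}, \cite{leland}, \cite{Lo}.

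With these two facts at hand the induction is identical to that of Theorem \ref{principalF}. For $m=1$, invertibility of $c_1$ reduces the hypothesis to $\tau_y(f_1)\in W$ for all $y$, so the $\tau$-invariant subspace generated by $f_1$ is contained in $W$ and is finite-dimensional; the structure theorem then makes $f_1$ a continuous exponential polynomial (equivalently, apply Proposition \ref{Ekaterina} to $y\mapsto\tau_{c_1y}$ with $L=W$ and $R(y)=\{0\}$). For the inductive step I substitute $y-c_1^{-1}h$ for $y$, apply $\tau_h$, and subtract, obtaining in $X$
\[
\sum_{i=2}^m\tau_{c_iy}(g_i)\in W^\ast,\qquad W^\ast:=\tau_h(W)+W,
\]
with $\dim W^\ast<\infty$, $g_i=\tau_{d_ih}(f_i)-f_i$ and $d_i=I_d-c_ic_1^{-1}$. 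Since the $c_i-c_1$ are invertible, so are the $d_i$ (because $\ker d_i=c_1\ker(c_i-c_1)=\{0\}$), and the induction hypothesis gives that each $g_i$ is a continuous exponential polynomial. Invertibility of $d_i$ allows the replacement of ``for all $h$'' by ``for all $y$'', so $\tau_y(f_i)-f_i$ is an exponential polynomial for every $y$; hence $\tau_y(f_i)$ lies in $\mathbb{C}f_i$ plus a finite-dimensional invariant subspace, and Proposition \ref{Ekaterina} places $f_i$ in a finite-dimensional invariant subspace. By the structure theorem $f_i$ is a continuous exponential polynomial for $i=2,\dots,m$, and the same holds for $f_1$ by symmetry.

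I expect the genuine obstacle to lie in the structural step rather than in the induction: one must verify that finite-dimensionality together with shift-invariance in the weak-$\ast$ topology of $\mathcal{D}(\mathbb{R}^d)'$ already forces smoothness, so that the bootstrap from a distributional solution to a continuous (indeed smooth) exponential-polynomial solution is genuinely justified and not tacitly assumed. The operator-theoretic reductions are otherwise word-for-word the continuous case, once $\tau_y$ and the differences are read as operators on $X$.
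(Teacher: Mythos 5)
Your proposal is correct and follows essentially the same route as the paper: the paper's proof of this theorem is exactly the induction of Theorem \ref{principalF} transplanted to $\mathcal{D}(\mathbb{R}^d)'$, with the Anselone--Korevaar theorem handling the base case and Proposition \ref{Ekaterina} applied to the shift representation on distributions. The ``structural step'' you flag as the genuine obstacle is precisely the content of the cited Anselone--Korevaar result, which the paper invokes without reproving, and your sketch of it is sound.
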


\begin{proof}  The proof is similar to the proof of Theorem \ref{principalF}. The case $m=1$ follows from   the Anselone-Korevaar theorem \cite{anselone}. The induction step goes as above with the only distinction that we  apply Proposition \ref{Ekaterina} to the representation of $\mathbb{R}^d$ by shifts on the space $\mathcal{D}(\mathbb{R}^d)'$.
\end{proof}

\begin{corollary} The statements of Theorem \ref{main} and Corollary \ref{ku} extend to the case that $v_k$ and $f_i$ are locally summable.
\end{corollary}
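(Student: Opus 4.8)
The plan is to reinterpret local summability through the lens of distributions and then feed everything into the distributional results already established. First I would record the reduction used throughout: the substitution $\widetilde{f}_i(x)=f_i(b_ix)$ turns (\ref{LCGG}) into the case $b_i=I_d$ with coefficient matrices $b_i^{-1}c_i$, and both local summability and the property of being an exponential polynomial are invariant under precomposition with the invertible linear map $b_i$. Moreover $b_i^{-1}c_i$ and $b_i^{-1}c_i-b_j^{-1}c_j$ are invertible exactly under the hypotheses of Theorem \ref{main} and Corollary \ref{ku}, so after this change of variable the matrix conditions become precisely those required by Theorems \ref{principal} and \ref{principalR}.

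Next, for the extension of Theorem \ref{main}, I would regard each locally summable $f_i$ and each $v_k$ as an element of $\mathcal{D}(\mathbb{R}^d)'$. Fixing $y$, the function $x\mapsto\sum_i f_i(x+c_iy)$ and the function $x\mapsto\sum_k u_k(y)v_k(x)$ are locally summable and agree almost everywhere, hence they define the same distribution; therefore $\sum_i\tau_{c_iy}(f_i)\in W$ in $\mathcal{D}(\mathbb{R}^d)'$, where $W=\span\{v_1,\dots,v_n\}$ is finite-dimensional. Theorem \ref{principal} then applies and yields that every $f_i$, viewed as a distribution, is a continuous exponential polynomial; since two locally summable functions defining the same distribution coincide almost everywhere, this means $f_i$ equals a continuous exponential polynomial a.e., which is the desired conclusion. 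Note that the arbitrary coefficient functions $u_k$ play no role here: for each $y$ they merely select an element of the fixed space $W$.

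For the extension of Corollary \ref{ku} I would treat the additional term of (\ref{ras}) in the same way: for each $y$ the function $x\mapsto\sum_s P_s(x)w_s(y)\exp\langle x,\varphi_s(y)\rangle$ is a (smooth, hence locally summable) exponential polynomial in $x$, so as a distribution it lies in a finite-dimensional translation-invariant subspace $R(y)$. Thus $\sum_i\tau_{c_iy}(f_i)\in W+R(y)$, which is exactly hypothesis (\ref{fund_gen}) of Theorem \ref{principalR}. As noted in the remark following Theorem \ref{principal}, the proof of Theorem \ref{principalR} goes through unchanged in $\mathcal{D}(\mathbb{R}^d)'$: the only operations used are shifts and differences, which are defined on distributions, the fact that an exponential polynomial lies in a finite-dimensional invariant subspace holds for distributions, and Proposition \ref{Ekaterina} is applied to the shift representation of $\mathbb{R}^d$ on $\mathcal{D}(\mathbb{R}^d)'$. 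Invoking this distributional version completes the argument.

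The step I expect to be the main obstacle is the faithful passage from the almost-everywhere pointwise identity to a genuine distributional identity holding for every $y$: one must check that for each $y$ the shifted sum really lands in the fixed finite-dimensional distributional space $W$ (and that $W$, together with $R(y)$, remains finite-dimensional inside $\mathcal{D}(\mathbb{R}^d)'$), and conversely that the \emph{continuous exponential polynomial} produced by the distributional theorem can be read back as an almost-everywhere statement about the original locally summable functions. Everything beyond these identifications is mechanical, being a direct transcription of the proofs of Theorems \ref{principalF} and \ref{principalR} into the space of distributions.
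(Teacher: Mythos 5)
Your proposal is correct and follows exactly the route the paper intends: the corollary is stated as an immediate consequence of Theorem \ref{principal} (and its obvious analogue for Theorem \ref{principalR}), obtained by viewing the locally summable $f_i$ and $v_k$ as elements of $\mathcal{D}(\mathbb{R}^d)'$ and noting that the pointwise identity then holds in the distributional sense for each $y$. Your extra care about the a.e.\ identifications and about the matrix hypotheses after the substitution $\widetilde{f}_i(x)=f_i(b_ix)$ only makes explicit what the paper leaves implicit.
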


On the other hand one can consider (\ref{LCGG}) in the case that both $u_k$ and $v_k$ are distributions. In this setting we should regard both sides of the equation as elements of $\mathcal{D}(\mathbb{R}^d\times\mathbb{R}^d )'$.

\begin{theorem} \label{principal2}
Assume that
\begin{equation} \label{delP2}
\sum_{i=1}^mf_i(x+c_iy)= \sum_{k=1}^nu_k(y)v_k(x),
\end{equation}
where $f_i, u_k, v_k\in\mathcal{D}(\mathbb{R}^d)'$, $c_i\in {GL}(d,\mathbb{R})$  and both sides of (\ref{delP2}) are considered as elements of $\mathcal{D}(\mathbb{R}^d\times\mathbb{R}^d )'$. If all matrices $c_i-c_j$ (for $i\neq j$) are invertible, then $f_k$ is a continuous exponential polynomial for $k=1,\ldots,m$.
\end{theorem}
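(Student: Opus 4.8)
The plan is to recast the right-hand side intrinsically: a distribution $T\in\mathcal{D}(\mathbb{R}^d\times\mathbb{R}^d)'$ of the form $\sum_{k=1}^n u_k(y)v_k(x)$ is exactly a \emph{finite-rank} distribution, i.e. one whose partial evaluations $\psi\mapsto\langle T,\cdot\otimes\psi\rangle$ span a finite-dimensional subspace of $\mathcal{D}(\mathbb{R}^d)'$ (namely $\mathrm{span}\{v_k\}$). This property is preserved under translations $\tau_{(h,s)}$ and under differences, since $\tau_{(h,s)}(v\otimes u)=(\tau_h v)\otimes(\tau_s u)$ and the rank is subadditive. Noting that each $f_i(x+c_iy)$ is the well-defined pullback of $f_i$ under the submersion $(x,y)\mapsto x+c_iy$, I would prove the theorem by induction on $m$, running the same elimination as in Theorem \ref{principalF}, but with the condition ``belongs to the fixed finite-dimensional space $W$'' replaced by ``has finite rank''.

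For the base case $m=1$, pairing $S=f_1(x+c_1y)$ with $\cdot\otimes\psi$ and changing variables shows that $\langle S,\cdot\otimes\psi\rangle$ equals, up to the factor $|\det c_1|^{-1}$, a convolution $f_1*\eta$, where $\eta$ runs through \emph{all} of $\mathcal{D}(\mathbb{R}^d)$ as $\psi$ does (because $c_1$ is invertible). Finite rank of $S$ then forces the translation-invariant space $V:=\mathrm{span}\{f_1*\eta:\eta\in\mathcal{D}(\mathbb{R}^d)\}$ to be finite-dimensional; being finite-dimensional and translation invariant, $V$ is closed in $\mathcal{D}(\mathbb{R}^d)'$ and consists of continuous exponential polynomials by the Anselone-Korevaar theorem \cite{anselone}. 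Taking an approximate identity $\eta_\varepsilon\to\delta$ gives $f_1*\eta_\varepsilon\to f_1$ with all terms in $V$, whence $f_1\in V$ is a continuous exponential polynomial.

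For the induction step I would fix $h\in\mathbb{R}^d$ and apply the combined shift $\tau_{(h,-c_1^{-1}h)}$ to $S=\sum_{i=1}^m f_i(x+c_iy)$. A direct computation shows this shift leaves the $i=1$ summand unchanged, so subtracting $S$ eliminates $f_1$ and yields $\sum_{i=2}^m g_i(x+c_iy)$, where $g_i=\tau_{d_i h}f_i-f_i$ and $d_i=I_d-c_ic_1^{-1}$. The left-hand side is a difference of two finite-rank distributions, hence finite rank, so the induction hypothesis applies (the matrices $c_i-c_j$, $i\neq j\ge 2$, are still invertible) and gives that each $g_i$ is a continuous exponential polynomial. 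Exactly as in the proof of Theorem \ref{principalF}, the matrices $d_i$ are invertible, so the statement ``$\tau_{d_ih}f_i-f_i$ is a continuous exponential polynomial for all $h$'' is equivalent to ``$\tau_z f_i-f_i$ is a continuous exponential polynomial for all $z\in\mathbb{R}^d$''; Proposition \ref{Ekaterina}, applied to the representation of $\mathbb{R}^d$ by shifts on $\mathcal{D}(\mathbb{R}^d)'$, then places $f_i$ in a finite-dimensional invariant subspace, so $f_i$ is a continuous exponential polynomial for $i=2,\dots,m$, and by symmetry (eliminating $c_2$ instead of $c_1$) also $f_1$.

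The main obstacle I anticipate is the rigorous bookkeeping of the finite-rank structure in $\mathcal{D}(\mathbb{R}^d\times\mathbb{R}^d)'$: justifying the partial-evaluation and pullback computations, checking that finite rank genuinely survives the passage to the difference $\tau_{(h,-c_1^{-1}h)}S-S=\sum_{i\ge 2}g_i(x+c_iy)$, and, most delicately, the ``unsmoothing'' in the base case, where finite rank is precisely what supplies the uniform finite dimensionality (and hence the closedness of $V$) needed to pass from the smooth approximants $f_1*\eta_\varepsilon$ back to $f_1$.
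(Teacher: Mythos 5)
Your argument is correct and follows essentially the same route as the paper: your combined shift $\tau_{(h,-c_1^{-1}h)}$ followed by subtraction is exactly the paper's difference operator $\Delta_{(h,-c_1^{-1}h)}$, the identity $\Delta_{(h,k)}(f(x+cy))=(\Delta_{h+ck}f)(x+cy)$ kills the $i=1$ summand, and the conclusion via Proposition \ref{Ekaterina} is identical. The only departures are minor and both sound: you carry the right-hand side through the abstract ``finite rank'' property where the paper simply expands the difference into $2n$ explicit terms $U_k(y)V_k(x)$, and you prove the base case $m=1$ directly by partial evaluation, mollification and the Anselone--Korevaar theorem where the paper cites Feny\"{o} and Shulman.
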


\begin{proof}
Let us denote by $\Delta_{(h,k)}$   the general difference operator in  $\mathcal{D}(\mathbb{R}^d\times\mathbb{R}^d )'$: $$\langle\Delta_{(h,k)}F(x,y),\phi(x,y)\rangle = \langle F(x,y),\phi(x-h,y-k)-\varphi(x,y)\rangle.$$ We will use the fact that the equality
\begin{equation}\label{folfact}
\Delta_{(h,k)}(f(x+cy))=(\Delta_{h+ck}(f))(x+cy)
\end{equation}
 holds for all $f\in\mathcal{D}(\mathbb{R}^d)'$, $h,k\in\mathbb{R}^d$, and $c\in {GL}(d,\mathbb{R})$; its validity can be checked by direct calculation.

 As above we proceed by induction on $m$, the number of summands in the left-hand side of the equation \eqref{delP2}. As we have already noticed, the case $m=1$ of this equation is known (see, e.g.,  \cite{fe-56, fe} for $d=1$ and \cite{S_PhD, Sh-dist} for domains of $\mathbb{R}^d$).  Assume the result holds true whenever we have less than $m$ summands. Let $f_i,u_k,v_k$ satisfy \eqref{delP2}.

Let us apply the operator $\Delta_{(h,-c_1^{-1}h)}$ to both sides of the equation (this is equivalent to substitute $x$ by $x+h$ and $y$ by $y-c_1^{-1}h$ in the equation). Then (\ref{folfact}) implies that
\begin{eqnarray*}
\Delta_{(h,-c_1^{-1}h)}\left[ \sum_{i=1}^m f_i(x+c_iy)\right]  &=&   \sum_{i=1}^m \Delta_{(h,-c_1^{-1}h)}f_i(x+c_iy) \\
&=&  \sum_{i=1}^m (\Delta_{h-c_ic_1^{-1}h}(f_i))(x+c_iy)\\
&=&  \sum_{i=2}^m (\Delta_{(I_d-c_ic_1^{-1})h}(f_i))(x+c_iy)\\
&=&  \sum_{i=2}^m g_i(x+c_iy),
\end{eqnarray*}
with $g_i= \Delta_{(I_d-c_ic_1^{-1})h}(f_i)\in\mathcal{D}(\mathbb{R}^d)'$ for $i=2,\ldots,m$. Hence, after applying the operator $\Delta_{(h,-c_1^{-1}h)}$ to the left-hand side of the equation, we reduce by $1$ the number of summands in the equation. On the other hand, in the right-hand side of the equation we get
\[
\Delta_{(h,-c_1^{-1}h)}\left( \sum_{k=1}^nu_k(y)v_k(x)\right)  = \sum_{k=1}^n\tau_{-c_1^{-1}h}(u_k)(y)\tau_h(v_k)(x) -
\sum_{k=1}^nu_k(y)v_k(x),
\]
which is an expression of the form
\[
\sum_{k=1}^{2n}U_k(y)V_k(x)
\]
with $U_k,V_k\in\mathcal{D}(\mathbb{R}^d)'$ for $k=1,\cdots, 2n$. Hence we can use the induction hypothesis to conclude that $g_i= \Delta_{(I_d-c_ic_1^{-1})h}(f_i)\in\mathcal{D}(\mathbb{R^d})'$  is a continuous exponential polynomial for $i=2,\ldots,m$. As in the proof of Theorem \ref{principal} we conclude, using Proposition \ref{Ekaterina}, that all $f_i$  are continuous exponential polynomials.
\end{proof}

\begin{corollary} \label{corprincipal2}
Assume that
\begin{equation*} 
\sum_{i=1}^mf_i(b_ix+c_iy)= \sum_{k=1}^nu_k(y)v_k(x),
\end{equation*}
where $f_i, u_k, v_k\in\mathcal{D}(\mathbb{R}^d)'$ and $b_i,c_i\in {GL}(d,\mathbb{R})$. 
If all matrices $b_i^{-1}c_i-b_j^{-1}c_j$ (for $i\neq j$) are invertible, then $f_k$ is 
a continuous exponential polynomial for $k=1,\ldots,m$.
\end{corollary}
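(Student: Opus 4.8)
The plan is to reduce the statement to Theorem \ref{principal2} by absorbing the matrices $b_i$ into the unknowns via a dilation, exactly as in the remark following Theorem \ref{main}. Concretely, I would introduce the distributions $\widetilde{f}_i = \sigma_{b_i}(f_i)$, i.e. $\widetilde{f}_i(x)=f_i(b_ix)$, and set $\widetilde{c}_i = b_i^{-1}c_i$. On the formal level the identity $b_ix+c_iy = b_i(x+b_i^{-1}c_iy)$ gives $f_i(b_ix+c_iy)=\widetilde{f}_i(x+\widetilde{c}_iy)$, so the equation rewrites as
\begin{equation*}
\sum_{i=1}^m\widetilde{f}_i(x+\widetilde{c}_iy)=\sum_{k=1}^nu_k(y)v_k(x),
\end{equation*}
which has exactly the shape of \eqref{delP2} with $c_i$ replaced by $\widetilde c_i$. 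The hypothesis that $b_i^{-1}c_i-b_j^{-1}c_j=\widetilde c_i-\widetilde c_j$ be invertible for $i\ne j$ is then precisely the hypothesis of Theorem \ref{principal2} for the matrices $\widetilde c_i$, so that theorem yields that every $\widetilde f_i$ is a continuous exponential polynomial.

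The one step that genuinely needs justification is the equality $f_i(b_ix+c_iy)=\widetilde f_i(x+\widetilde c_iy)$ in $\mathcal D(\mathbb R^d\times\mathbb R^d)'$, since the left-hand side is the pullback of a one-variable distribution under the non-injective linear map $(x,y)\mapsto b_ix+c_iy$. Because $b_i$ is invertible, the assignment $(x,y)\mapsto(u,v):=(b_ix+c_iy,\,y)$ is a linear automorphism of $\mathbb R^{2d}$, and in these coordinates one has $b_ix+c_iy=u$ and $x+\widetilde c_iy=b_i^{-1}u$; hence both $f_i(b_ix+c_iy)$ and $\widetilde f_i(x+\widetilde c_iy)$ equal $f_i(u)\otimes 1_v$, using $\widetilde f_i(b_i^{-1}u)=f_i(u)$. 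This is the distributional counterpart of the pointwise identity noted after Theorem \ref{main}, and it is a routine change-of-variables computation compatible with \eqref{folfact}.

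Finally I would observe that the class of continuous exponential polynomials on $\mathbb R^d$ is invariant under invertible linear substitutions: composing $\sum_j P_j(x)e^{\langle x,\lambda_j\rangle}$ with $x\mapsto Ax$ produces $\sum_j P_j(Ax)e^{\langle x,A^t\lambda_j\rangle}$, again an exponential polynomial, and continuity is preserved because $x\mapsto Ax$ is a homeomorphism. Applying this to $A=b_i^{-1}$ and using $f_i=\sigma_{b_i^{-1}}(\widetilde f_i)$, I conclude that each $f_i$ is a continuous exponential polynomial, as desired. I expect no serious obstacle beyond the careful bookkeeping of the two-variable change of variables in the second step, which is why the result is stated as a corollary.
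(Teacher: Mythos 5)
Your proposal is correct and follows exactly the route the paper intends: the corollary is deduced from Theorem \ref{principal2} via the substitution $\widetilde f_i(x)=f_i(b_ix)$, $\widetilde c_i=b_i^{-1}c_i$, just as indicated in the remark after Theorem \ref{main}. Your additional care in justifying the identity $f_i(b_ix+c_iy)=\widetilde f_i(x+\widetilde c_iy)$ in $\mathcal{D}(\mathbb{R}^d\times\mathbb{R}^d)'$ and the invariance of continuous exponential polynomials under invertible linear substitutions is a welcome bit of rigor the paper leaves implicit.
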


As a consequence, the results of Kakutani-Nagumo, Walsh, Ghurie-Olkin and others mentioned in the Introduction, extend to distributions. For example, we have

\begin{theorem} \label{GO-t}
Assume that $f_i, a_{\alpha}, b_{\beta}\in  \mathcal{D}(\mathbb{R}^d)'$ for $1\leq i\leq m$, $0\leq |\alpha|\leq r$ and $0\leq |\beta|\leq s$, and equation \eqref{G-O} is satisfied with  $A(x,y)=\sum_{|\alpha|\leq r}x^{\alpha} \cdot a_{\alpha}(y)$ and   $B(y,x)=\sum_{|\beta|\leq s}b_{\beta}(x)\cdot y^{\beta}$. Assume, furthermore, that all matrices $c_i$ (for all $i$) and $c_i-c_j$ (for $i\neq j$) are invertible. Then all $f_i$ are (in the distributional sense) ordinary polynomials.
\end{theorem}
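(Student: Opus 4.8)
The plan is to recognize Theorem~\ref{GO-t} as a special case of Theorem~\ref{principal2}, and then to run a short frequency argument that upgrades the conclusion from ``exponential polynomial'' to ``ordinary polynomial'' by exploiting the degree bounds on $A$ and $B$.

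First I would observe that, with the given $A$ and $B$, the right-hand side of \eqref{G-O} already has the product form demanded by \eqref{delP2}. Indeed,
\[
A(x,y)+B(y,x)=\sum_{|\alpha|\le r}x^{\alpha}a_{\alpha}(y)+\sum_{|\beta|\le s}b_{\beta}(x)\,y^{\beta}
\]
is a finite sum $\sum_k u_k(y)v_k(x)$: the $A$-part supplies the terms with $v_k(x)=x^{\alpha}$, $u_k(y)=a_{\alpha}(y)$, and the $B$-part the terms with $u_k(y)=y^{\beta}$, $v_k(x)=b_{\beta}(x)$, each a legitimate element of $\mathcal{D}(\mathbb{R}^d\times\mathbb{R}^d)'$ (a polynomial in one variable multiplying a distribution in the other). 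Since the matrices $c_i-c_j$ are invertible, Theorem~\ref{principal2} applies and shows that every $f_i$ is a continuous, hence smooth, exponential polynomial. It then remains only to rule out nonzero exponential frequencies.

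Next I would exploit the degrees. Because $A$ has degree $\le r$ in $x$ and $B$ has degree $\le s$ in $y$, the operator $D=(\Delta^{x}_{h})^{r+1}(\Delta^{y}_{k})^{s+1}$, where $\Delta^{x}_{h}$ and $\Delta^{y}_{k}$ act only in the variable $x$, resp.\ $y$, annihilates the whole right-hand side for every $h,k\in\mathbb{R}^d$. Writing $E(x,y)=\sum_{i=1}^{m}f_i(x+c_iy)$, this gives $DE=0$. Substituting the exponential-polynomial expansions $f_i=\sum_{\lambda\in\Lambda_i}p_{i,\lambda}\,e^{\langle\lambda,\cdot\rangle}$ yields
\[
E(x,y)=\sum_{i}\sum_{\lambda\in\Lambda_i}p_{i,\lambda}(x+c_iy)\,e^{\langle\lambda,x\rangle}\,e^{\langle c_i^{t}\lambda,\,y\rangle},
\]
an exponential polynomial in $(x,y)$ with frequencies $(\lambda,c_i^{t}\lambda)$. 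The crucial point is that for $\lambda\neq0$ these frequencies never collide: if $(\lambda,c_i^{t}\lambda)=(\lambda',c_j^{t}\lambda')$ then $\lambda=\lambda'$ and $(c_i-c_j)^{t}\lambda=0$, forcing $i=j$ by invertibility of $c_i-c_j$. Hence the nonzero-frequency contributions of the various summands cannot cancel one another.

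Finally I would conclude using uniqueness of the exponential-polynomial decomposition. Since each difference operator preserves the exponential frequency, $DE=0$ forces $D$ to annihilate each frequency block $p_{i,\lambda}(x+c_iy)\,e^{\langle\lambda,x\rangle+\langle c_i^{t}\lambda,y\rangle}$ separately. Suppose some $\lambda\in\Lambda_i$ were nonzero; then $c_i^{t}\lambda\neq0$ as well, since $c_i$ is invertible. Choosing $h,k$ with $e^{\langle\lambda,h\rangle}\neq1$ and $e^{\langle c_i^{t}\lambda,k\rangle}\neq1$, a direct computation shows that $(\Delta^{x}_{h})^{r+1}(\Delta^{y}_{k})^{s+1}$ multiplies the leading coefficient of that block by $(e^{\langle\lambda,h\rangle}-1)^{r+1}(e^{\langle c_i^{t}\lambda,k\rangle}-1)^{s+1}\neq0$, contradicting $DE=0$. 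Therefore every $\Lambda_i=\{0\}$ and each $f_i$ is an ordinary polynomial. I expect the main obstacle to be precisely this last step: one must carefully justify that the two-variable operator $D$ acts diagonally on the frequency blocks and that a block carrying a nonzero frequency genuinely survives $D$, and it is exactly here that the invertibility of the matrices $c_i$ and $c_i-c_j$ enters.
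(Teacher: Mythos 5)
Your proposal is correct, and its second half follows a genuinely different route from the paper's. The paper's own proof is two lines: it invokes Theorem \ref{principal} to conclude that the $f_i$ (and hence the $b_{\beta}$, being combinations of shifted $f_i$) are continuous exponential polynomials, and then simply cites Ghurye--Olkin \cite{Gu_O}, where the statement that continuous solutions of \eqref{G-O} are ordinary polynomials was already proved. Your first step agrees in substance with this reduction; in fact your choice of Theorem \ref{principal2} is the more accurate one, since the $a_{\alpha}$ are distributions rather than functions of $y$, so the right-hand side is naturally an element of $\mathcal{D}(\mathbb{R}^d\times\mathbb{R}^d)'$ of the form $\sum_k u_k(y)v_k(x)$ rather than, for each fixed $y$, a vector in a fixed finite-dimensional space of distributions in $x$. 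Where you diverge is the passage from ``exponential polynomial'' to ``ordinary polynomial'': instead of quoting \cite{Gu_O} you give a self-contained argument, applying $D=(\Delta^x_h)^{r+1}(\Delta^y_k)^{s+1}$, which kills the right-hand side, and using the non-collision of the frequencies $(\lambda,c_i^t\lambda)$ for $\lambda\neq 0$ (invertibility of $c_i-c_j$) together with the injectivity of $D$ on a block with $\lambda\neq0$ and $c_i^t\lambda\neq0$ (invertibility of $c_i$). This is sound: the phrase ``multiplies the leading coefficient by $(e^{\langle\lambda,h\rangle}-1)^{r+1}(e^{\langle c_i^t\lambda,k\rangle}-1)^{s+1}$'' is best replaced by the cleaner remark that $e^{\langle\lambda,h\rangle}\tau_h-I$ is invertible on polynomials of bounded degree whenever $e^{\langle\lambda,h\rangle}\neq1$, and one should note that $h,k$ may be chosen depending on the frequency, which is legitimate because $D$ annihilates the right-hand side for every $h,k$. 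What each approach buys: the paper's is shorter and inherits from \cite{Gu_O} the degree bounds on the resulting polynomials; yours is self-contained, avoids re-deriving the continuous-case theorem, and makes completely explicit where each invertibility hypothesis enters.
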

\begin{proof}
It follows from Theorem \ref{principal} that $f_i$ are continuous exponential polynomials, which implies immediately the same about $b_{\beta}(x)$. Therefore one can apply results of \cite{Gu_O} where the statement was proved for continuous functions.
\end{proof}

\end{document}